\newtheorem{theorem}{Theorem}[section]
\newtheorem{lemma}[theorem]{Lemma}
\newtheorem{corollary}[theorem]{Corollary}
\newtheorem{proposition}[theorem]{Proposition}
\theoremstyle{remark}
\newtheorem{remark}[theorem]{\bf Remark}
\theoremstyle{definition}
\numberwithin{equation}{section}
\begin{document}

\title[A characterization of reflexive spaces of operators]{A characterization of reflexive spaces of operators}

\author{Janko Bra\v{c}i\v{c} and Lina Oliveira}

\address{Janko Bra\v{c}i\v{c}, University of Ljubljana, NTF, A\v{s}ker\v{c}eva c.\ 12, SI-1000 Ljubljana, Slovenia}
\email{janko.bracic@fmf.uni-lj.si}

\address{Lina Oliveira, Center for Mathematical Analysis, Geometry and Dynamical Systems {\em and}\\
Department  of Mathematics, Instituto Superior T\'ecnico, Universidade de Lisboa, Av. Rovisco Pais,\\ 1049-001 Lisboa, Portugal}
\email{linaoliv@math.tecnico.ulisboa.pt}
 
\keywords{Reflexive space of operators, order-preserving map}

\subjclass[2010]{Primary 47A15}

\thanks{The first author was supported by the Slovenian Research Agency through the research program P1-0222. 
The second author is partially funded by FCT/Portugal through the projects PEst-OE/EEI/LA0009/2013 and  EXCL/MAT-GEO/0222/2012.}

\begin{abstract}
	We show that for a linear space of operators ${\mathcal M}\subseteq {\mathcal B}(H_1,H_2)$ the following assertions are equivalent.
	(i) ${\mathcal M} $ is reflexive in the sense of Loginov--Shulman. (ii) There exists an order-preserving map $\Psi=(\psi_1,\psi_2)$ on a bilattice 
	$Bil({\mathcal M})$ of subspaces determined by ${\mathcal M}$, with $P\leq \psi_1(P,Q)$ and $Q\leq \psi_2(P,Q)$, for any pair $(P,Q)\in Bil({\mathcal M})$, and
	such that an operator $T\in {\mathcal B}(H_1,H_2)$ lies in ${\mathcal M}$ if and only if $\psi_2(P,Q)T\psi_1(P,Q)=0$ for all 
	$(P,Q)\in Bil( {\mathcal M})$. This extends to reflexive spaces the Erdos--Power type characterization of weakly closed bimodules over a nest algebra.
\end{abstract}

\maketitle

\section{Introduction and preliminaries} \label{sec01}
In  \cite{EP}, Erdos and Power  characterized the weakly closed bimodules of a nest algebra in terms of  
order homomorphisms on the  lattice  of invariant subspaces of the algebra. Deguang showed in \cite{Deg} 
that, given any reflexive subalgebra $\sigma$-weakly generated by its rank one operators, the $\sigma$-weakly 
closed bimodules over the algebra could analogously be characterized in terms of  order homomorphisms on the 
lattice of invariant subspaces of the algebra. Li and Li \cite[Proposition 2.6]{LL} have extended the mentioned 
results to the realm of Banach spaces. It is worth noticing that  the bimodules considered in the Erdos--Power theorems  
are implicitly reflexive subspaces in the sense of Loginov--Shulman (cf. \cite{LS}). 
The aim of the present paper is to extend this type of characterization to all such reflexive subspaces. 
The main result Theorem \ref{theo01} shows that, for every reflexive space ${\mathcal M}$ of operators between 
two complex Hilbert spaces, there exists an order homomorphism on a bilattice of subspaces determined by  ${\mathcal M}$
 which describes this subspace in the sense of Erdos--Power \cite[Theorem 1.5]{EP}. 

The proof of Theorem \ref{theo01} requires some auxiliary results appearing in Section 2. In the rest of the present 
section, apart from the notation,  we shall also establish the facts about bilattices needed in the sequel.
 
Let  $H$ be  a  complex Hilbert space, let ${\mathcal B}(H)$ be the Banach algebra of all bounded 
linear operators on $H$, and let ${\mathcal P}(H)$ be the set of all orthogonal projections on $H$. 
It is well known that ${\mathcal P}(H)$ is a lattice when endowed with the partial order relation defined, 
for all $P_1, P_2\in H$, by $P_1 \leq P_2\; \iff\; P_1 H \subseteq P_2H$. 
The join $P_1 \vee P_2$ is the orthogonal projection onto $\overline{P_1H+P_2 H}$ and the meet  
$P_1 \wedge P_2$ is the orthogonal projection onto $P_1H \cap P_2 H$. In fact, ${\mathcal P}(H)$  
is a complete lattice whose top and bottom elements are, respectively, the identity operator $I$ and the zero operator $0$.

Recall that the lattice $Lat({\mathcal U})$ of invariant subspaces of a subset ${\mathcal U}$ of ${\mathcal B}(H)$ is given by
\begin{equation*}
Lat({\mathcal U})=\{ P\in {\mathcal P}(H);\quad P^{\perp}TP=0,\quad \text{for all}\; T\in {\mathcal U}\},
\end{equation*}
where $P^{\perp}=I-P$. It is clear that $Lat({\mathcal U})$ is a sublattice of ${\mathcal P}(H)$ which is strongly closed 
and therefore complete, i.e., for every subset ${\mathcal F} \subseteq Lat({\mathcal U})$, the supremum $\vee {\mathcal F}$ and the 
infimum $\wedge {\mathcal F}$ lie in $Lat({\mathcal U})$ (see \cite{Hal}). 

If ${\mathcal U}\subseteq {\mathcal B}(H)$ is a non-empty subset, then let ${\mathcal U}^*=\{ T^*;\; T\in {\mathcal U}\}$. 
We say that ${\mathcal U}$ is \emph{selfadjoint} if ${\mathcal U}^*={\mathcal U}$. It is obvious that $P\in Lat({\mathcal U})$ if 
and only if $P^\perp\in Lat({\mathcal U}^*)$, i.e., $Lat({\mathcal U}^*)= Lat({\mathcal U})^{\perp}$. 

Let $H_1,H_2$ be complex Hilbert spaces. We endow the Cartesian product 
${\mathcal P}(H_1)$ $\times {\mathcal P}(H_2)$ with the partial order $\preceq$ which is defined, for all 
$(P_1,Q_1),$ $(P_2,Q_2)$ $\in {\mathcal P}(H_1)\times {\mathcal P}(H_2)$, by
\begin{equation} \label{eq04}
(P_1,Q_1)\preceq (P_2,Q_2)\qquad \text{if and only if}\qquad P_1\leq P_2\; \text{ and }\; Q_1\geq Q_2.
\end{equation}
Hence the  operations of join  and meet  are given, respectively, by
\begin{equation} \label{eq03}
\begin{split}
(P_1,Q_1)\vee (P_2,Q_2)&=(P_1\vee P_2,Q_1\wedge Q_2)\quad\text{and}\\
(P_1,Q_1)\wedge (P_2,Q_2)&=(P_1\wedge P_2,Q_1\vee Q_2).
\end{split}
\end{equation}
It follows that ${\mathcal P}(H_1)\times {\mathcal P}(H_2)$ together with $\preceq$ is a lattice as it contains all the binary joins 
and meets. From now on we write ${\mathcal P}(H_1)\times_{\preceq} {\mathcal P}(H_2)$ whenever we consider the Cartesian product to be  
endowed with the partial order \eqref{eq04}, i.e., with the lattice structure \eqref{eq03}. The corresponding notation will also be used for Cartesian products 
of subsets of ${\mathcal P}(H_1)\times {\mathcal P}(H_2)$. Unless otherwise stated, it is assumed that the partial order under consideration
will always be $\preceq$.

Following \cite{ST}, we call a subset ${\mathcal L}$ of ${\mathcal P}(H_1)\times_{\preceq} {\mathcal P}(H_2)$ a \emph{bilattice} if 
it is closed under the lattice operations \eqref{eq03} and contains the pairs $(0,0)$, $(0,I)$, and $(I,0)$. Examples of bilattices are 
${\mathcal P}(H_1)\times_{\preceq} {\mathcal P}(H_2)$, of course, and
\begin{equation} \label{eq08}
BIL({\mathcal U})=\{ (P,Q)\in {\mathcal P}(H_1)\times_{\preceq} {\mathcal P}(H_2);\quad QTP=0,\quad \text{for any}\; T\in {\mathcal U}\},
\end{equation}
where ${\mathcal U} \subseteq {\mathcal B}(H_1,H_2)$ is an arbitrary non-empty set.

Recall that, for a non-empty family ${\mathcal F} \subseteq {\mathcal P}(H)$,
\begin{equation*}
Alg({\mathcal F})=\{ T\in {\mathcal B}(H);\quad P^{\perp}TP=0,\quad \text{for all}\; P\in {\mathcal F}\}
\end{equation*}
is a weakly closed subalgebra of ${\mathcal B}(H)$ that contains the identity operator. A subalgebra 
${\mathcal A}$ of $ {\mathcal B}(H)$ is said to be \emph{reflexive} if $AlgLat({\mathcal A})={\mathcal A}$.
The notion of reflexive algebras has been generalized in several different directions; see \cite{Had} for a general view
of reflexivity and \cite{HIY} for a recently introduced  generalization. The concept of reflexivity is naturally extended to spaces of operators as follows.

For a non-empty family ${\mathcal F}\subseteq {\mathcal P}(H_1)\times_{\preceq} {\mathcal P}(H_2)$, let 
\begin{equation*}
Op({\mathcal F})=\{ T\in {\mathcal B}(H_1,H_2);\quad QTP=0,\quad \text{for all}\; (P,Q)\in {\mathcal F} \}.
\end{equation*}
It is easily seen that $Op({\mathcal F})$ is a weakly closed linear subspace of ${\mathcal B}(H_1,H_2)$.
A subspace ${\mathcal M} \subseteq {\mathcal B}(H_1,H_2)$ is said to be \emph{reflexive} if $Op BIL({\mathcal M})={\mathcal M}$. 
This definition is equivalent to that of Loginov and Shulman in \cite{LS}, where a subspace ${\mathcal M}$ is said to be reflexive if ${\mathcal M}$ 
coincides with its \emph{reflexive cover} 
\begin{equation*}
Ref({\mathcal M})=\{ S\in {\mathcal B}(H_1,H_2);\; Sx\in \overline{{\mathcal M} x},   \quad \text{for all}\; x\in H_1\}.
\end{equation*}
In fact,  $OpBIL({\mathcal M})=Ref({\mathcal M})$ (cf. \cite[p. 298]{ST}).

\begin{remark} \label{rem01}
Notice that,  if ${\mathcal A} \subseteq {\mathcal B}(H)$ is an algebra containing the identity operator, then  
$Ref({\mathcal A})=AlgLat({\mathcal A})$.
\end{remark}

\section{Subspaces and modules} \label{sec02}

For a linear subspace ${\mathcal M} \subseteq {\mathcal B}(H_1,H_2)$, let 
\begin{equation} \label{eq01}
{\mathcal A}_{{\mathcal M}}=\{ A\in {\mathcal B}(H_1);\quad TA\in {\mathcal M},\quad\text{for all}\; T\in {\mathcal M}\}
\end{equation}
and
\begin{equation} \label{eq02}
{\mathcal B}_{{\mathcal M}}=\{ B\in {\mathcal B}(H_2);\quad BT\in {\mathcal M},\quad\text{for all}\; T\in {\mathcal M}\}.
\end{equation}
It is easily seen that ${\mathcal A}_{{\mathcal M}}$ and $ {\mathcal B}_{{\mathcal M}}$ are algebras containing the identity operator 
and that ${\mathcal M}$ is ${\mathcal B}_{{\mathcal M}}$-${\mathcal A}_{{\mathcal M}}$-bimodule. It is clear that these are the largest  
subalgebras of ${\mathcal B}(H_1)$, respectively ${\mathcal B}(H_2)$, such that ${\mathcal M}$ is a bimodule over them. If 
${\mathcal M}$ is closed (respectively, weakly closed), then ${\mathcal B}_{{\mathcal M}}$ and ${\mathcal A}_{{\mathcal M}}$ are closed 
(respectively, weakly closed). Next we show that ${\mathcal A}_{{\mathcal M}}$ and ${\mathcal B}_{{\mathcal M}}$ are reflexive whenever
${\mathcal M}$ is a reflexive space.

\begin{proposition} \label{prop02}
If ${\mathcal M} \subseteq {\mathcal B}(H_1,H_2)$ is a reflexive space, then ${\mathcal A}_{{\mathcal M}}$ and 
$ {\mathcal B}_{{\mathcal M}}$ are reflexive algebras.
\end{proposition}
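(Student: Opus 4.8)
The plan is to reduce the reflexivity of the two algebras to the reflexivity of $\mathcal{M}$ via the reflexive cover. Since $\mathcal{A}_{\mathcal{M}}$ and $\mathcal{B}_{\mathcal{M}}$ are unital algebras, Remark~\ref{rem01} identifies their $AlgLat$ with their reflexive covers, so each is a reflexive algebra precisely when it equals its own reflexive cover. The inclusions $\mathcal{A}_{\mathcal{M}}\subseteq Ref(\mathcal{A}_{\mathcal{M}})$ and $\mathcal{B}_{\mathcal{M}}\subseteq Ref(\mathcal{B}_{\mathcal{M}})$ are automatic from the definition of $Ref$ (each $Ax$ lies in $\mathcal{A}_{\mathcal{M}}x\subseteq\overline{\mathcal{A}_{\mathcal{M}}x}$), so the whole task is to establish the reverse inclusions $Ref(\mathcal{A}_{\mathcal{M}})\subseteq\mathcal{A}_{\mathcal{M}}$ and $Ref(\mathcal{B}_{\mathcal{M}})\subseteq\mathcal{B}_{\mathcal{M}}$.

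For $\mathcal{A}_{\mathcal{M}}$, I would fix $S\in Ref(\mathcal{A}_{\mathcal{M}})$ and show $TS\in\mathcal{M}$ for every $T\in\mathcal{M}$; because $\mathcal{M}$ is reflexive, i.e. $\mathcal{M}=OpBIL(\mathcal{M})$, this amounts to checking $QTSP=0$ for every pair $(P,Q)\in BIL(\mathcal{M})$. Fixing such a pair and a vector $x\in H_1$, the defining property $SPx\in\overline{\mathcal{A}_{\mathcal{M}}(Px)}$ of the reflexive cover yields a net $(A_i)$ in $\mathcal{A}_{\mathcal{M}}$ with $A_iPx\to SPx$. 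Applying the bounded operators $T$ and then $Q$ gives $QTA_iPx\to QTSPx$. But $TA_i\in\mathcal{M}$ by definition of $\mathcal{A}_{\mathcal{M}}$, and $(P,Q)\in BIL(\mathcal{M})$ forces $QTA_iP=0$, so every term $QTA_iPx$ vanishes and hence $QTSPx=0$. As $x$ and $(P,Q)$ are arbitrary, $TS\in OpBIL(\mathcal{M})=\mathcal{M}$, whence $S\in\mathcal{A}_{\mathcal{M}}$.

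The argument for $\mathcal{B}_{\mathcal{M}}$ is entirely parallel, but with the multiplication on the other side. Given $S\in Ref(\mathcal{B}_{\mathcal{M}})$ and $T\in\mathcal{M}$, I would verify $ST\in\mathcal{M}$ by showing $QSTP=0$ for all $(P,Q)\in BIL(\mathcal{M})$. Here the reflexive-cover condition is applied to the vector $TPx\in H_2$, producing a net $(B_i)$ in $\mathcal{B}_{\mathcal{M}}$ with $B_i(TPx)\to S(TPx)$; since $B_iT\in\mathcal{M}$ one has $QB_iTP=0$, and passing to the limit after applying $Q$ gives $QSTPx=0$.

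The individual steps are short, and the only point demanding care — the closest thing to an obstacle — is keeping the three characterizations correctly aligned: using reflexivity of $\mathcal{M}$ to replace membership in $\mathcal{M}$ by the family of equations $QTP=0$ indexed by $BIL(\mathcal{M})$, and using the defining property of $\mathcal{A}_{\mathcal{M}}$ (resp. $\mathcal{B}_{\mathcal{M}}$) to keep the approximants $TA_i$ (resp. $B_iT$) inside $\mathcal{M}$ so that those equations apply to them. Beyond that, the proof rests only on the continuity of the bounded maps $T$ and $Q$ and on the elementary fact that a space is always contained in its reflexive cover.
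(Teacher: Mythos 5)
Your proof is correct and follows essentially the same route as the paper's: reduce via Remark~\ref{rem01} to showing $Ref({\mathcal A}_{{\mathcal M}})={\mathcal A}_{{\mathcal M}}$, fix $S$ in the reflexive cover, and show $TS\in{\mathcal M}$ for every $T\in{\mathcal M}$ by approximating $S$ pointwise with operators $A_i\in{\mathcal A}_{{\mathcal M}}$ and using that each $TA_i$ lies in ${\mathcal M}$. The only (immaterial) difference is that you certify $TS\in{\mathcal M}$ through the bilattice equations $QTSP=0$, i.e.\ $TS\in OpBIL({\mathcal M})$, whereas the paper concludes $TSx\in\overline{{\mathcal M}x}$ directly, i.e.\ $TS\in Ref({\mathcal M})$; these are the same condition, since $OpBIL({\mathcal M})=Ref({\mathcal M})$.
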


\begin{proof}
It will only be shown that ${\mathcal A}_{{\mathcal M}}$ is reflexive since the reflexivity of ${\mathcal B}_{{\mathcal M}}$ can be  
similarly proved. In view of Remark \ref{rem01}, it suffices to show that $Ref({\mathcal A}_{{\mathcal M}})={\mathcal A}_{{\mathcal M}}$. 
In other words, fixing $S\in Ref({\mathcal A}_{{\mathcal M}})$, we need to show that, for all $T\in {\mathcal M}$, the operator $TS$ lies in ${\mathcal M}$. 
Since this is trivially satisfied by $T=0$, henceforth we shall assume that $T\ne 0$.

Let $x\in H_1$ and $\varepsilon >0$ be arbitrary. Since $S\in Ref({\mathcal A}_{{\mathcal M}})$, there exists 
$A_{x,\varepsilon}\in {\mathcal A}_{{\mathcal M}}$ such that $ \| Sx-A_{x,\varepsilon}x\|<\varepsilon/\| T\|$. Hence 
$ \| TSx-TA_{x,\varepsilon}x\|\leq \| T\| \| Sx-A_{x,\varepsilon}x\|<\varepsilon $.
The operator $TA_{x,\varepsilon}$ lies in ${\mathcal M}$ and, therefore, we can conclude that $TS\in Ref({\mathcal M})={\mathcal M}$, as required.
\end{proof}

\begin{corollary} \label{cor01}
Let ${\mathcal M}$ be a linear subspace of  ${\mathcal B}(H_1,H_2)$. Then 
$Ref\bigl( {\mathcal A}_{{\mathcal M}}\bigr)\subseteq {\mathcal A}_{Ref({\mathcal M})}$
and $Ref\bigl( {\mathcal B}_{{\mathcal M}}\bigr)\subseteq {\mathcal B}_{Ref({\mathcal M})}$.
\end{corollary}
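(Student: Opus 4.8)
The plan is to unwind the definition of ${\mathcal A}_{Ref({\mathcal M})}$ and reduce the claim to a statement of the same shape as Proposition \ref{prop02}, the only difference being that the reflexive cover now has to be invoked \emph{twice}. I would treat in detail only the inclusion $Ref({\mathcal A}_{{\mathcal M}})\subseteq {\mathcal A}_{Ref({\mathcal M})}$, since the argument for ${\mathcal B}_{{\mathcal M}}$ is entirely symmetric, with the order of multiplication reversed. Fixing $S\in Ref({\mathcal A}_{{\mathcal M}})$, it suffices by definition \eqref{eq01} applied to the space $Ref({\mathcal M})$ to show that $TS\in Ref({\mathcal M})$ for every $T\in Ref({\mathcal M})$; that is, that $TSx\in\overline{{\mathcal M} x}$ for every $x\in H_1$. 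The case $T=0$ being trivial, I assume $T\ne 0$.

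The core of the argument is a single approximation followed by an elementary containment of cyclic subspaces. Fix $x\in H_1$ and $\varepsilon>0$. Since $S\in Ref({\mathcal A}_{{\mathcal M}})$, choose $A\in{\mathcal A}_{{\mathcal M}}$ with $\|Sx-Ax\|<\varepsilon/\|T\|$, so that $\|TSx-TAx\|<\varepsilon$. The point to exploit is that $A$ carries the cyclic subspace ${\mathcal M}(Ax)$ \emph{inside} ${\mathcal M} x$: indeed, for every $M\in{\mathcal M}$ we have $MA\in{\mathcal M}$ by \eqref{eq01}, whence $M(Ax)=(MA)x\in{\mathcal M} x$ and therefore $\overline{{\mathcal M}(Ax)}\subseteq\overline{{\mathcal M} x}$. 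Applying the hypothesis $T\in Ref({\mathcal M})$ to the vector $Ax$ gives $TAx\in\overline{{\mathcal M}(Ax)}\subseteq\overline{{\mathcal M} x}$. Hence $TSx$ lies within $\varepsilon$ of the closed set $\overline{{\mathcal M} x}$, and as $\varepsilon$ was arbitrary we conclude $TSx\in\overline{{\mathcal M} x}$.

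For the second inclusion I would run the mirror image: given $S\in Ref({\mathcal B}_{{\mathcal M}})$ and $T\in Ref({\mathcal M})$, fix $x\in H_1$ and apply $S\in Ref({\mathcal B}_{{\mathcal M}})$ at the vector $Tx\in H_2$ to find $B\in{\mathcal B}_{{\mathcal M}}$ with $\|STx-BTx\|<\varepsilon$. Here the containment is used in the other direction: $B\,\overline{{\mathcal M} x}\subseteq\overline{B{\mathcal M} x}\subseteq\overline{{\mathcal M} x}$ because $B$ is bounded and $B{\mathcal M}\subseteq{\mathcal M}$ by \eqref{eq02}. Since $Tx\in\overline{{\mathcal M} x}$, this yields $BTx\in\overline{{\mathcal M} x}$, so $STx$ again lies within $\varepsilon$ of $\overline{{\mathcal M} x}$ and letting $\varepsilon\to 0$ gives $STx\in\overline{{\mathcal M} x}$.

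The step I expect to be the crux is the realization that the hypothesis $T\in Ref({\mathcal M})$ should be evaluated not at $x$ but at the \emph{shifted} vector $Ax$ (respectively, that $S\in Ref({\mathcal B}_{{\mathcal M}})$ should be used at $Tx$). This is precisely what turns the bimodule relations ${\mathcal M}{\mathcal A}_{{\mathcal M}}\subseteq{\mathcal M}$ and ${\mathcal B}_{{\mathcal M}}{\mathcal M}\subseteq{\mathcal M}$ into inclusions of cyclic subspaces, and it is the only genuinely new ingredient beyond the approximation scheme already used in Proposition \ref{prop02}. Note that no completeness or reflexivity of ${\mathcal M}$ is required, which is why the statement holds for an arbitrary linear subspace.
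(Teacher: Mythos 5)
Your proof is correct, but it is organized differently from the paper's. The paper splits the argument in two steps: it first proves the algebraic inclusion $\mathcal{A}_{\mathcal{M}}\subseteq \mathcal{A}_{Ref(\mathcal{M})}$ (for $A\in\mathcal{A}_{\mathcal{M}}$ and $T\in Ref(\mathcal{M})$ it approximates $TAx$ by $S_{x,\varepsilon}Ax$ with $S_{x,\varepsilon}\in\mathcal{M}$ and notes $S_{x,\varepsilon}A\in\mathcal{M}$ --- exactly the shifted-vector trick you identify as the crux), and then closes up by applying the reflexive cover to both sides: monotonicity gives $Ref(\mathcal{A}_{\mathcal{M}})\subseteq Ref(\mathcal{A}_{Ref(\mathcal{M})})$, and Proposition \ref{prop02}, applied to the reflexive space $Ref(\mathcal{M})$, gives $Ref(\mathcal{A}_{Ref(\mathcal{M})})=\mathcal{A}_{Ref(\mathcal{M})}$. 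You instead run a single direct $\varepsilon$-argument: approximate $Sx$ by $Ax$ with $A\in\mathcal{A}_{\mathcal{M}}$ (this is where $S\in Ref(\mathcal{A}_{\mathcal{M}})$ enters), then evaluate $T\in Ref(\mathcal{M})$ at the shifted vector $Ax$ and use $\overline{\mathcal{M}(Ax)}\subseteq\overline{\mathcal{M}x}$. Both proofs rest on the same two ingredients (the Ref-approximation and the bimodule relation converted into an inclusion of cyclic subspaces), but your arrangement is self-contained: it invokes neither Proposition \ref{prop02} nor the idempotence $Ref(Ref(\mathcal{M}))=Ref(\mathcal{M})$ that is implicitly needed to apply that proposition to $Ref(\mathcal{M})$; what the paper's factorization buys is brevity and reuse of an already-proved result. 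Your treatment of the $\mathcal{B}$-side (approximating at the vector $Tx$ and pushing $B$ through the closure by boundedness) is likewise sound and, pleasantly, requires no division by $\|T\|$ at all.
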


\begin{proof}
Let $A\in {\mathcal A}_{{\mathcal M}}$. If $T\in Ref({\mathcal M})$, then, for any $x\in H_1$ and any $\varepsilon>0$, there exists 
$S_{x,\varepsilon}\in {\mathcal M}$ such that $\| TAx-S_{x,\varepsilon}Ax\| <\varepsilon$. Since $S_{x,\varepsilon}A\in {\mathcal M}$,  we conclude 
that $TA\in Ref({\mathcal M})$. By Proposition \ref{prop02}, the algebra ${\mathcal A}_{Ref({\mathcal M})}$ is  reflexive,  from which follows 
that  $Ref\bigl( {\mathcal A}_{{\mathcal M}}\bigr)\subseteq Ref\bigl( {\mathcal A}_{Ref({\mathcal M})}\bigr)={\mathcal A}_{Ref({\mathcal M})}$.

The proof of the second inclusion is  similar.
\end{proof}

Let $tr(\cdot)$ be the trace functional and let ${\mathcal C}_1(H)\subseteq {\mathcal B}(H)$ be the ideal of trace-class operators. 
The dual of ${\mathcal C}_1(H)$ can be identified with ${\mathcal B}(H)$ by means of the pairing $\langle C,A\rangle=tr(CA^*)$, with  
$C\in {\mathcal C}_1(H),\; A\in {\mathcal B}(H)$. The preannihilator of a subset ${\mathcal U}\subseteq {\mathcal B}(H)$ is 
$ {\mathcal U}_{\perp}=\{ C\in {\mathcal C}_1(H);\; tr(CA^*)=0,\; \text{for all}\; A\in {\mathcal U}\}$ 
and the annihilator of ${\mathcal V}\subseteq {\mathcal C}_1(H)$ is 
${\mathcal V}^{\perp}=\{ A\in {\mathcal B}(H);\;tr(CA^*)=0,\; \text{for all}\;  C\in {\mathcal V}\}$. 
It is obvious that ${\mathcal U}_{\perp}$ and ${\mathcal V}^{\perp}$ are linear spaces and that a linear subspace 
${\mathcal M} \subseteq {\mathcal B}(H)$ is $\sigma$-weakly closed if and only if ${\mathcal M}=({\mathcal M}_{\perp})^{\perp}$.

If ${\mathcal U}, {\mathcal V}$ are two non-empty sets of operators, then we denote by ${\mathcal U}{\mathcal V}$ the set of all products $TS$, 
where $T\in {\mathcal U}$ and $S\in {\mathcal V}$.

\begin{proposition} \label{prop06}
Let ${\mathcal M}$ be a linear subspace of ${\mathcal B}(H)$. Then the following assertions hold.

\begin{itemize}
\item[(i)] $({\mathcal A}_{{\mathcal M}})^{*}={\mathcal B}_{{\mathcal M}^*}$.

\item[(ii)]  If ${\mathcal M}$ is $\sigma$-weakly closed, then ${\mathcal A}_{{\mathcal M}}=({\mathcal M}^{*}{\mathcal M}_{\perp})^{\perp}$ and 
${\mathcal B}_{{\mathcal M}}=({\mathcal M} {\mathcal M}_{\perp})^{\perp}$.

\item[(iii)]  If ${\mathcal M}$ is selfadjoint and $\sigma$-weakly closed, then ${\mathcal A}_{{\mathcal M}}={\mathcal B}_{{\mathcal M}}$  
is a $C^*$-algebra.

\item[(iv)]  If ${\mathcal M}$ is selfadjoint, $\sigma$-weakly closed, and reflexive, then 
${\mathcal A}_{{\mathcal M}}={\mathcal B}_{{\mathcal M}}$  is a von Neumann algebra.
\end{itemize}
\end{proposition}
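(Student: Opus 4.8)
The plan is to prove the four assertions in order, each time converting a one-sided module condition into a linear condition that can be tested by duality. Part (i) I would settle by a direct adjoint computation: $A\in\mathcal{A}_{\mathcal{M}}$ says $TA\in\mathcal{M}$ for all $T\in\mathcal{M}$, and taking adjoints turns this into $A^{*}T^{*}=(TA)^{*}\in\mathcal{M}^{*}$. Since $T^{*}$ ranges over $\mathcal{M}^{*}$ as $T$ ranges over $\mathcal{M}$, the condition is exactly $A^{*}S\in\mathcal{M}^{*}$ for all $S\in\mathcal{M}^{*}$, i.e. $A^{*}\in\mathcal{B}_{\mathcal{M}^{*}}$. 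Every step is reversible, which yields $(\mathcal{A}_{\mathcal{M}})^{*}=\mathcal{B}_{\mathcal{M}^{*}}$; the symmetric statement $(\mathcal{B}_{\mathcal{M}})^{*}=\mathcal{A}_{\mathcal{M}^{*}}$ is obtained the same way.

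For part (ii) I would use $\sigma$-weak closedness in the form $\mathcal{M}=(\mathcal{M}_{\perp})^{\perp}$, so that $TA\in\mathcal{M}$ is equivalent to $tr\big(D(TA)^{*}\big)=0$ for every $D\in\mathcal{M}_{\perp}$. Cyclicity of the trace rewrites this as $tr\big((T^{*}D)A^{*}\big)=\langle T^{*}D,A\rangle$, whence $A\in\mathcal{A}_{\mathcal{M}}$ if and only if $A$ annihilates every product $T^{*}D$ with $T\in\mathcal{M}$ and $D\in\mathcal{M}_{\perp}$; this is precisely $\mathcal{A}_{\mathcal{M}}=(\mathcal{M}^{*}\mathcal{M}_{\perp})^{\perp}$. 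The companion description of $\mathcal{B}_{\mathcal{M}}$ I would obtain by the same regrouping applied to $BT\in\mathcal{M}$, or more economically by feeding $\mathcal{M}^{*}$ into the $\mathcal{A}$-identity and combining it with part (i) and the easy relation $(\mathcal{M}^{*})_{\perp}=(\mathcal{M}_{\perp})^{*}$.

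Part (iii) is where the real work lies. Assuming $\mathcal{M}=\mathcal{M}^{*}$, part (i) immediately gives $(\mathcal{A}_{\mathcal{M}})^{*}=\mathcal{B}_{\mathcal{M}^{*}}=\mathcal{B}_{\mathcal{M}}$ and, symmetrically, $(\mathcal{B}_{\mathcal{M}})^{*}=\mathcal{A}_{\mathcal{M}}$. Granting the equality $\mathcal{A}_{\mathcal{M}}=\mathcal{B}_{\mathcal{M}}$, the common algebra $\mathcal{D}$ then satisfies $\mathcal{D}^{*}=\mathcal{D}$ and, being unital and norm-closed (as $\mathcal{M}$ is closed), is a $C^{*}$-algebra. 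To establish the equality I would pass to the descriptions of part (ii), which for selfadjoint $\mathcal{M}$ (using $(\mathcal{M}_{\perp})^{*}=\mathcal{M}_{\perp}$) read $\mathcal{A}_{\mathcal{M}}=(\mathcal{M}\mathcal{M}_{\perp})^{\perp}$ and $\mathcal{B}_{\mathcal{M}}=(\mathcal{M}_{\perp}\mathcal{M})^{\perp}$, thereby reducing the matter to comparing the closed linear spans of $\mathcal{M}\mathcal{M}_{\perp}$ and $\mathcal{M}_{\perp}\mathcal{M}$; by part (i) it suffices to prove one inclusion, say $\mathcal{A}_{\mathcal{M}}\subseteq\mathcal{B}_{\mathcal{M}}$. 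I expect this coincidence of the two one-sided multiplier algebras to be the main obstacle, since it is exactly the point where the selfadjointness of $\mathcal{M}$ must be leveraged to make the two products interchangeable.

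Finally, for part (iv) I would add reflexivity of $\mathcal{M}$ and invoke Proposition \ref{prop02}, which makes $\mathcal{A}_{\mathcal{M}}$ a reflexive algebra and, in particular, weakly closed. Combined with part (iii), this exhibits $\mathcal{A}_{\mathcal{M}}=\mathcal{B}_{\mathcal{M}}$ as a weakly closed, unital, selfadjoint subalgebra of $\mathcal{B}(H)$, hence a von Neumann algebra, which is the desired conclusion.
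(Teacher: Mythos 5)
Your part (i), the first identity in part (ii), and part (iv) are correct and essentially reproduce the paper's own arguments. The genuine gap is the one you flagged yourself: in part (iii) you never prove ${\mathcal A}_{{\mathcal M}}={\mathcal B}_{{\mathcal M}}$, you only reduce it to comparing the annihilators of ${\mathcal M}{\mathcal M}_{\perp}$ and ${\mathcal M}_{\perp}{\mathcal M}$ and defer it as ``the main obstacle.'' In fact your own computations already show why this cannot be smoothed over: the regrouping $tr\bigl(C(BT)^{*}\bigr)=tr\bigl((CT^{*})B^{*}\bigr)$ yields ${\mathcal B}_{{\mathcal M}}=({\mathcal M}_{\perp}{\mathcal M}^{*})^{\perp}$, which is \emph{not} the identity ${\mathcal B}_{{\mathcal M}}=({\mathcal M}{\mathcal M}_{\perp})^{\perp}$ asserted in the statement (and you silently switch to the corrected version $({\mathcal M}_{\perp}{\mathcal M})^{\perp}$ when you write out part (iii)). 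Had the stated identity been true, (iii) would be immediate, since for ${\mathcal M}={\mathcal M}^{*}$ the products ${\mathcal M}^{*}{\mathcal M}_{\perp}$ and ${\mathcal M}{\mathcal M}_{\perp}$ coincide --- and that is exactly, and only, how the paper argues; with the corrected identity, the equality ${\mathcal A}_{{\mathcal M}}={\mathcal B}_{{\mathcal M}}$ requires the step you could not supply.

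You could not supply it because it is false. Take $H=\mathbb{C}^{2}$ and ${\mathcal M}=\mathbb{C}P$, where $P$ is the orthogonal projection onto the first basis vector: this ${\mathcal M}$ is selfadjoint, $\sigma$-weakly closed, and even reflexive. Then $A\in{\mathcal A}_{{\mathcal M}}$ iff $PA\in\mathbb{C}P$ iff the $(1,2)$ entry of $A$ vanishes, whereas $B\in{\mathcal B}_{{\mathcal M}}$ iff $BP\in\mathbb{C}P$ iff the $(2,1)$ entry of $B$ vanishes. So ${\mathcal A}_{{\mathcal M}}$ is the lower triangular algebra, ${\mathcal B}_{{\mathcal M}}$ is the upper triangular algebra, they are distinct, and neither is selfadjoint: assertions (iii) and (iv) fail. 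The same example refutes the second identity of (ii), since here $({\mathcal M}{\mathcal M}_{\perp})^{\perp}=({\mathcal M}^{*}{\mathcal M}_{\perp})^{\perp}={\mathcal A}_{{\mathcal M}}\neq{\mathcal B}_{{\mathcal M}}$, while the corrected formula $({\mathcal M}_{\perp}{\mathcal M}^{*})^{\perp}$ does give ${\mathcal B}_{{\mathcal M}}$. (Everything remains consistent with (i): the two algebras are adjoints of each other.) In other words, the paper's own proof is flawed: the second equality of (ii) is \emph{not} ``similarly proved'' --- the similar computation produces $({\mathcal M}_{\perp}{\mathcal M}^{*})^{\perp}$ --- and (iii), (iv) rest entirely on the misstated formula. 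Your instinct to isolate ${\mathcal A}_{{\mathcal M}}={\mathcal B}_{{\mathcal M}}$ as the crux was exactly right; the honest conclusion is that Proposition \ref{prop06}~(ii) (second identity), (iii) and (iv) are false as stated, not that a cleverer argument is missing. (This defect is localized: Proposition \ref{prop06} is not invoked in the proof of the paper's main theorem.)
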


\begin{proof}
(i) An operator $A\in {\mathcal B}(H)$ lies in $({\mathcal A}_{{\mathcal M}})^{*}$ if and only if $TA^*\in {\mathcal M}$ for every 
$T\in {\mathcal M}$. However this is equivalent to $AT^*\in {\mathcal M}^*$ for any $T^*\in {\mathcal M}^*$, which by the definition means that 
$A\in {\mathcal B}_{{\mathcal M}^*}$.

(ii) Let $A\in {\mathcal A}_{{\mathcal M}}$. For arbitrary $T\in {\mathcal M}$ and $C\in {\mathcal M}_{\perp}$, we have $tr(T^*CA^*)=tr(C(TA)^*)=0$, 
since $TA\in {\mathcal M}$. This proves that $A\in ({\mathcal M}^* {\mathcal M}_{\perp})^{\perp}$. On the other hand, if 
$A\in ({\mathcal M}^* {\mathcal M}_{\perp})^{\perp}$ and $T\in {\mathcal M}$, then $tr(C(TA)^*)=tr(T^*CA^*)=0$, for any $C\in {\mathcal M}_{\perp}$.  
Hence $TA\in ({\mathcal M}_{\perp})^{\perp}={\mathcal M}$. The second equality is similarly proved.

(iii) It follows from (ii) that ${\mathcal A}_{{\mathcal M}}={\mathcal B}_{{\mathcal M}}$. By (i), the algebra ${\mathcal A}_{{\mathcal M}}$ 
is selfadjoint and  closed since  ${\mathcal M}$ itself is closed. Hence ${\mathcal A}_{{\mathcal M}}$ is a $C^*$-algebra.

(iv) By (iii), ${\mathcal A}_{{\mathcal M}}={\mathcal B}_{{\mathcal M}}$ is a $C^*$-algebra. However, since ${\mathcal M}$ is reflexive, it is 
weakly closed and, therefore, ${\mathcal A}_{{\mathcal M}}$ is also weakly closed.
\end{proof}

\section{A characterization of reflexivity} \label{sec03}

Let ${\mathcal M}\subseteq {\mathcal B}(H_1,H_2)$ be a linear subspace and let ${\mathcal A}_{{\mathcal M}}$ and 
${\mathcal B}_{{\mathcal M}}$ be the algebras defined in \eqref{eq01}--\eqref{eq02}. The associated bilattice $BIL({\mathcal M})$ (see \eqref{eq08})
is very large. For our purposes it suffices to consider a smaller bilattice to be defined below. Firstly, we state the following lemma which is just a 
formalization of a remark in \cite[p. 298]{ST}. We include a short proof.

\begin{lemma} \label{lem01}
Let ${\mathcal M}$ be a linear subspace of  ${\mathcal B}(H_1,H_2)$. For any pair $(P,Q)\in BIL({\mathcal M})$, there exists a pair 
$(P^\prime,Q^\prime)\in BIL({\mathcal M})$ such that $P^{\prime}\in Lat({\mathcal A}_{{\mathcal M}})$, 
$Q^{\prime}$ $\in Lat({\mathcal B}_{{\mathcal M}})^{\perp}$, $P\leq P^\prime$, and $Q\leq Q^\prime$.
\end{lemma}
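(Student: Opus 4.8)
The plan is to produce $P'$ and $Q'$ by the standard device of passing to the smallest invariant subspace generated by the relevant range, exploiting the two bimodule inclusions $\mathcal{M}\mathcal{A}_{\mathcal{M}}\subseteq\mathcal{M}$ and $\mathcal{B}_{\mathcal{M}}\mathcal{M}\subseteq\mathcal{M}$ recorded in Section 2. Throughout I would use the reformulation that a pair $(P,Q)$ lies in $BIL(\mathcal{M})$ precisely when $T(PH_1)\subseteq Q^{\perp}H_2$ for every $T\in\mathcal{M}$; the target conclusions $P\le P'$, $Q\le Q'$, $P'\in Lat(\mathcal{A}_{\mathcal{M}})$, and $Q'\in Lat(\mathcal{B}_{\mathcal{M}})^{\perp}$ (equivalently $Q'^{\perp}\in Lat(\mathcal{B}_{\mathcal{M}})$) then all become statements about inclusions of four closed subspaces, using the identification of $Lat$ with invariance from Section 1.

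First I would enlarge $P$. Let $P'$ be the orthogonal projection onto the closed linear span $\mathcal{V}_1=\overline{\mathcal{A}_{\mathcal{M}}PH_1}$. Since $\mathcal{A}_{\mathcal{M}}$ is an algebra, $\mathcal{V}_1$ is invariant under $\mathcal{A}_{\mathcal{M}}$, so $P'\in Lat(\mathcal{A}_{\mathcal{M}})$; and since $I\in\mathcal{A}_{\mathcal{M}}$ we have $PH_1\subseteq\mathcal{V}_1$, whence $P\le P'$. The key point is that this enlargement does not destroy membership in $BIL(\mathcal{M})$: for $T\in\mathcal{M}$ and $A\in\mathcal{A}_{\mathcal{M}}$ the operator $TA$ again lies in $\mathcal{M}$, so $TA(PH_1)\subseteq\mathcal{M}(PH_1)\subseteq Q^{\perp}H_2$; as $Q^{\perp}H_2$ is closed and $T$ continuous, this passes to the closure and gives $T(P'H_1)\subseteq Q^{\perp}H_2$. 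Thus $(P',Q)\in BIL(\mathcal{M})$, and in particular $\mathcal{M}(P'H_1)\subseteq Q^{\perp}H_2$.

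Next I would enlarge $Q$ relative to the new $P'$. Set $Q'=I-R$, where $R$ is the orthogonal projection onto $\mathcal{V}_2=\overline{\mathcal{M}P'H_1}$. The relation $\mathcal{B}_{\mathcal{M}}\mathcal{M}\subseteq\mathcal{M}$ shows, exactly as above, that $\mathcal{V}_2$ is invariant under $\mathcal{B}_{\mathcal{M}}$, so $Q'^{\perp}=R\in Lat(\mathcal{B}_{\mathcal{M}})$, that is $Q'\in Lat(\mathcal{B}_{\mathcal{M}})^{\perp}$. By the previous step $\mathcal{M}(P'H_1)\subseteq Q^{\perp}H_2$, and since $Q^{\perp}H_2$ is closed we get $\mathcal{V}_2\subseteq Q^{\perp}H_2$, hence $Q'^{\perp}\le Q^{\perp}$ and so $Q\le Q'$. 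Finally $\mathcal{M}(P'H_1)\subseteq\mathcal{V}_2=Q'^{\perp}H_2$ says exactly that $Q'TP'=0$ for all $T\in\mathcal{M}$, so $(P',Q')\in BIL(\mathcal{M})$ and all four required properties are in place.

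The only genuine subtlety, rather than a real obstacle, is the order of the two enlargements: I must build $Q'$ from $P'$ and not from $P$, so that the final pair lies in $BIL(\mathcal{M})$, while first checking that enlarging $P$ has already forced $\mathcal{M}(P'H_1)$ into $Q^{\perp}H_2$ so that $Q\le Q'$ survives. Each verification reduces to one of the two bimodule inclusions combined with the elementary continuity argument for passing from an invariant set to its closed linear span.
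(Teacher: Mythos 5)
Your proof is correct, but it takes a genuinely different route for the second projection. You and the paper choose the same $P'$, namely the projection onto $\overline{\mathcal{A}_{\mathcal{M}}PH_1}$, but the paper then enlarges $Q$ symmetrically and independently, taking $Q'$ to be the projection onto $\overline{\mathcal{B}_{\mathcal{M}}^{*}QH_2}$ (the smallest $\mathcal{B}_{\mathcal{M}}^{*}$-invariant enlargement of $QH_2$), and must then verify the joint compatibility $Q'TP'=0$ by an $\varepsilon$-approximation argument: it approximates $P'x$ by vectors $A_\varepsilon P x_\varepsilon$, pairs against vectors $B^{*}Qy$, and uses that $BTA_\varepsilon\in\mathcal{M}$ to kill the inner products. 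You instead proceed sequentially: first check that $(P',Q)$ is still in $BIL(\mathcal{M})$ (using only $\mathcal{M}\mathcal{A}_{\mathcal{M}}\subseteq\mathcal{M}$), and then take $Q'$ to be the complement of the projection onto $\overline{\mathcal{M}P'H_1}$, which is the \emph{largest} admissible partner for $P'$ rather than the smallest enlargement of $Q$; with this choice, $(P',Q')\in BIL(\mathcal{M})$ holds by construction, $Q\leq Q'$ falls out of the intermediate step, and $Q'\in Lat(\mathcal{B}_{\mathcal{M}})^{\perp}$ follows from $\mathcal{B}_{\mathcal{M}}\mathcal{M}\subseteq\mathcal{M}$. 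What your route buys is the elimination of the two-sided approximation argument --- every verification is an inclusion of closed subspaces --- and it implicitly anticipates the map $\phi$ of \eqref{eq05}, since your $Q'$ is essentially $\phi(P')$; what the paper's symmetric construction buys is that each projection is enlarged using only its own algebra, so the two enlargements can be described (and used) independently of one another, with all the interaction between $\mathcal{A}_{\mathcal{M}}$ and $\mathcal{B}_{\mathcal{M}}$ confined to the final compatibility check.
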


\begin{proof}
Let $P^\prime$ be the orthogonal projection onto $\overline{{\mathcal A}_{{\mathcal M}} PH_1}$ and let $Q^\prime$ be the orthogonal projection 
onto $\overline{{\mathcal B}_{{\mathcal M}}^{*} QH_2}$. It is obvious that $\overline{{\mathcal A}_{{\mathcal M}} PH_1}$ is 
invariant for any $A\in {\mathcal A}_{{\mathcal M}}$ and that $\overline{{\mathcal B}_{{\mathcal M}}^* QH_2}$ is invariant for any 
$B^*\in {\mathcal B}_{{\mathcal M}}^*$. Hence $P^\prime \in Lat({\mathcal A}_{{\mathcal M}})$ and 
$Q^\prime \in Lat({\mathcal B}_{{\mathcal M}}^*)=Lat({\mathcal B}_{{\mathcal M}})^{\perp}$. Observe that 
$PH_1 \subseteq \overline{{\mathcal A}_{{\mathcal M}} P H_1}$ and 
$QH_2 \subseteq \overline{{\mathcal B}_{{\mathcal M}}^* Q H_2}$, since both algebras contain the identity operator. 
Consequently, $P\leq P^\prime$ and $Q\leq Q^\prime$.

To prove that  $(P^\prime,Q^\prime)$ lies in  $BIL({\mathcal M})$, we have to see that, for any $T\in {\mathcal M}$,  the equality $Q^\prime TP^\prime=0$ holds, 
i.e., $TP^\prime H_1 \perp Q^\prime H_2$. Let $x\in H_1$ be arbitrary. For any $\varepsilon>0$, there exist 
$A_\varepsilon \in {\mathcal A}_{{\mathcal M}}$ and $x_\varepsilon \in H_1$ such that 
$\| P^\prime x-A_\varepsilon Px_\varepsilon\| <\varepsilon$, and therefore $\| TP^\prime x-TA_\varepsilon Px_\varepsilon\| <\varepsilon \| T\|$.
For arbitrary $B^*\in {\mathcal B}_{{\mathcal M}}^*$ and $y\in H_2$, we have 
$\langle TA_\varepsilon Px_\varepsilon,B^*Qy\rangle=\langle QBTA_\varepsilon Px_\varepsilon,y\rangle =0$, since $BTA_\varepsilon \in {\mathcal M}$. Hence
\begin{equation*}
\begin{split}
|\langle TP^\prime x,B^*Qy\rangle| &= |\langle TP^\prime x-TA_\varepsilon Px_\varepsilon,B^*Qy\rangle|\\
&\leq \| TP^\prime x-TA_\varepsilon Px_\varepsilon\|\| B^*Qy\| < \varepsilon \| T\| \| B^*Qy\|,
\end{split}
\end{equation*}
yielding  $TP^\prime x \perp {\mathcal B}^* QH_2$, from which it follows that $TP^\prime H_1 \perp Q^\prime H_2$.
\end{proof}

Let
\begin{equation*}
Bil({\mathcal M})=BIL({\mathcal M}) \cap \bigl(Lat({\mathcal A}_{{\mathcal M}})\times_{\preceq} Lat({\mathcal B}_{{\mathcal M}})^\perp\bigr).
\end{equation*}
It is clear that $Bil({\mathcal M})$ is a  bilattice. 

\begin{proposition} \label{prop09}
Let ${\mathcal M}$ be a linear subspace of  ${\mathcal B}(H_1,H_2)$. Then 
$$OpBIL({\mathcal M})=OpBil({\mathcal M}).$$
\end{proposition}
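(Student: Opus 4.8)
The plan is to prove the two inclusions separately, the first being immediate and the second resting entirely on Lemma \ref{lem01}. First I observe that $Bil({\mathcal M})\subseteq BIL({\mathcal M})$ by construction, and that the assignment ${\mathcal F}\mapsto Op({\mathcal F})$ is order-reversing for inclusion: enlarging the family of constraint pairs $(P,Q)$ can only shrink the set of operators $T$ satisfying $QTP=0$ for all of them. Hence $OpBIL({\mathcal M})\subseteq OpBil({\mathcal M})$ with no further work.

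For the reverse inclusion $OpBil({\mathcal M})\subseteq OpBIL({\mathcal M})$, I would fix an operator $T\in OpBil({\mathcal M})$ together with an arbitrary pair $(P,Q)\in BIL({\mathcal M})$, and aim to show $QTP=0$. The idea is to dominate $(P,Q)$ by a pair lying in the smaller bilattice $Bil({\mathcal M})$, where the hypothesis on $T$ can be applied. By Lemma \ref{lem01} there exists a pair $(P^\prime,Q^\prime)\in BIL({\mathcal M})$ with $P^\prime\in Lat({\mathcal A}_{{\mathcal M}})$ and $Q^\prime\in Lat({\mathcal B}_{{\mathcal M}})^\perp$, hence $(P^\prime,Q^\prime)\in Bil({\mathcal M})$, and moreover $P\leq P^\prime$ and $Q\leq Q^\prime$. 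Since $T\in OpBil({\mathcal M})$, this gives $Q^\prime TP^\prime=0$.

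It then remains to descend from $(P^\prime,Q^\prime)$ back to $(P,Q)$, which is a routine projection computation. The order relations $P\leq P^\prime$ and $Q\leq Q^\prime$ translate into the identities $P=P^\prime P$ and $Q=QQ^\prime$, so that $QTP=(QQ^\prime)T(P^\prime P)=Q\,(Q^\prime TP^\prime)\,P=0$. As $(P,Q)\in BIL({\mathcal M})$ was arbitrary, $T\in OpBIL({\mathcal M})$, and the two inclusions together yield the claimed equality. I do not anticipate any genuine obstacle here: the entire content of the proposition is packaged into Lemma \ref{lem01}, whose point is precisely that every constraint pair in $BIL({\mathcal M})$ is dominated by one in the distinguished sub-bilattice $Bil({\mathcal M})$ without losing the annihilation property; once that lemma is available, only the elementary regrouping above is required.
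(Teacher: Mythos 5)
Your proof is correct and follows essentially the same route as the paper: the easy inclusion from $Bil({\mathcal M})\subseteq BIL({\mathcal M})$, and the reverse inclusion by dominating an arbitrary pair $(P,Q)\in BIL({\mathcal M})$ with a pair $(P',Q')\in Bil({\mathcal M})$ via Lemma \ref{lem01} and computing $QTP=QQ'TP'P=0$. The only (harmless) difference is that you spell out explicitly why the lemma's pair lies in $Bil({\mathcal M})$, a step the paper leaves implicit.
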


\begin{proof}
Since $Bil({\mathcal M})$ is a subset of $BIL({\mathcal M})$, it follows that $OpBIL({\mathcal M})$ $\subseteq OpBil({\mathcal M})$. 
To show that the reverse  inclusion also holds, we begin by fixing an operator $T\in OpBil({\mathcal M})$ and a pair of projections 
$(P,Q)\in BIL({\mathcal M})$. By Lemma \ref{lem01}, there exists a pair $(P^\prime,Q^\prime)\in Bil({\mathcal M})$ such that
$P\leq P^\prime$ and $Q\leq Q^\prime$. Hence $P^\prime P=P$ and $QQ^\prime=Q$. It follows that $QTP=QQ^\prime TP^\prime P=0$ and, therefore, $T$ lies in 
$OpBIL({\mathcal M})$, as required.
\end{proof}

Let ${\mathcal M} \subseteq {\mathcal B}(H_1,H_2)$ be a linear space.  Define 
$\phi:\,Lat({\mathcal A}_{{\mathcal M}})\to Lat({\mathcal B}_{{\mathcal M}})^\perp$ by
\begin{equation} \label{eq05}
\phi(P)=\vee \{ Q\in Lat({\mathcal B}_{{\mathcal M}})^\perp;\; (P,Q)\in Bil({\mathcal M})\},
\end{equation}
and similarly define $\theta:\,Lat({\mathcal B}_{{\mathcal M}})^\perp \to Lat({\mathcal A}_{{\mathcal M}})$ by
\begin{equation} \label{eq06}
\theta(Q)=\vee \{ P\in Lat({\mathcal A}_{{\mathcal M}});\; (P,Q)\in Bil({\mathcal M})\}.
\end{equation}
Observe that none of the sets appearing in \eqref{eq05}--\eqref{eq06} is empty as
$(P,0)$, $(0,Q)\in$ $Bil({\mathcal M})$, for any  $P\in Lat({\mathcal A}_{{\mathcal M}}), Q\in Lat({\mathcal B}_{{\mathcal M}})^\perp $. 
The next proposition lists some properties of the  maps $\phi$ and $\theta$.

\begin{proposition} \label{prop10}
Let ${\mathcal M}$ be a linear subspace of  ${\mathcal B}(H_1,H_2)$ and let $\phi, \theta$ be the maps defined in \eqref{eq05}--\eqref{eq06}. 
Then the following assertions hold.
\begin{itemize}
\item[(i)] $\phi$ and $\theta$ are order-reversing maps.

\item[(ii)]  $(P,\phi(P)), (\theta(Q),Q)\in Bil({\mathcal M})$, for any $P \in Lat({\mathcal A}_{{\mathcal M}})$ and $Q\in Lat({\mathcal B}_{{\mathcal M}})^{\perp}$.

\item[(iii)] If ${\mathcal C}\subseteq Lat({\mathcal A}_{{\mathcal M}})$ and ${\mathcal D}\subseteq Lat({\mathcal B}_{{\mathcal M}})^\perp$ are non-empty sets, then
$\phi(\vee {\mathcal C})=\wedge \phi({\mathcal C})$ and $\theta(\vee {\mathcal D})=\wedge \theta({\mathcal D})$.

\item[(iv)] $P \leq \theta\phi(P)$ and $Q \leq \phi\theta(Q)$, for all $P\in Lat({\mathcal A}_{{\mathcal M}})$ and $Q\in Lat({\mathcal B}_{{\mathcal M}})^\perp$.

\item[(v)] $\phi \theta\phi=\phi$ and $\theta \phi \theta=\theta$.
\end{itemize}
\end{proposition}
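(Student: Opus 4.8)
The plan is to recognize the pair $(\phi,\theta)$ as an antitone Galois connection between the complete lattices $Lat({\mathcal A}_{{\mathcal M}})$ and $Lat({\mathcal B}_{{\mathcal M}})^\perp$, and to extract all five assertions from the single adjunction identity
$$(P,Q)\in Bil({\mathcal M})\quad\Longleftrightarrow\quad Q\leq \phi(P)\quad\Longleftrightarrow\quad P\leq\theta(Q).$$
Once this is available, (i), (iii), (iv) and (v) become formal lattice-theoretic consequences; the analytic content sits entirely in establishing (ii) and the equivalences above. I would therefore prove (i) and (ii) first by hand, then deduce the adjunction, and finally harvest the remaining parts.

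For (i), I would argue by inclusion of the defining sets. If $P_1\leq P_2$ in $Lat({\mathcal A}_{{\mathcal M}})$ and $(P_2,Q)\in Bil({\mathcal M})$, then from $P_2P_1=P_1$ one gets $QTP_1=QTP_2P_1=0$ for every $T\in{\mathcal M}$, so $(P_1,Q)\in Bil({\mathcal M})$. Thus the set over which the join defining $\phi(P_2)$ is taken is contained in the one for $\phi(P_1)$, whence $\phi(P_2)\leq\phi(P_1)$; the argument for $\theta$ is symmetric. For (ii), the key is an orthogonality computation: writing $\phi(P)H_2=\overline{\sum_Q QH_2}$ over all $Q$ with $(P,Q)\in Bil({\mathcal M})$, each such $Q$ satisfies $QTP=0$, i.e.\ $TPH_1\perp QH_2$, for all $T\in{\mathcal M}$. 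Hence $TPH_1$ is orthogonal to the closed span $\phi(P)H_2$, giving $\phi(P)TP=0$. Since $Lat({\mathcal B}_{{\mathcal M}})^\perp$ is complete, being the orthocomplement of the complete lattice $Lat({\mathcal B}_{{\mathcal M}})$, the join $\phi(P)$ lies in $Lat({\mathcal B}_{{\mathcal M}})^\perp$, so $(P,\phi(P))\in Bil({\mathcal M})$; symmetrically $(\theta(Q),Q)\in Bil({\mathcal M})$.

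The adjunction now follows quickly. If $(P,Q)\in Bil({\mathcal M})$ then $Q$ belongs to the set defining $\phi(P)$, so $Q\leq\phi(P)$. Conversely, if $Q\leq\phi(P)$ then $Q=Q\phi(P)$, and combining with $\phi(P)TP=0$ from (ii) yields $QTP=Q\phi(P)TP=0$; as $P\in Lat({\mathcal A}_{{\mathcal M}})$ and $Q\in Lat({\mathcal B}_{{\mathcal M}})^\perp$ by hypothesis, this gives $(P,Q)\in Bil({\mathcal M})$. The same reasoning with $\theta$ shows $(P,Q)\in Bil({\mathcal M})\Leftrightarrow P\leq\theta(Q)$, completing the displayed equivalences.

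With the connection in hand, the rest is routine. For (iii), order-reversal (i) gives $\phi(\vee{\mathcal C})\leq\wedge\phi({\mathcal C})$; for the reverse, set $Q=\wedge\phi({\mathcal C})$, note $Q\leq\phi(P)$ for every $P\in{\mathcal C}$, so $P\leq\theta(Q)$ by the adjunction, hence $\vee{\mathcal C}\leq\theta(Q)$ and therefore $Q\leq\phi(\vee{\mathcal C})$, again by the adjunction. Assertion (iv) is immediate: $(P,\phi(P))\in Bil({\mathcal M})$ from (ii) gives $P\leq\theta(\phi(P))$ directly, and symmetrically for $Q$. Finally (v) is the standard triple-composition argument: applying the order-reversing $\phi$ to $P\leq\theta\phi(P)$ gives $\phi\theta\phi(P)\leq\phi(P)$, while (iv) applied to $\phi(P)$ gives $\phi(P)\leq\phi\theta\phi(P)$, so $\phi\theta\phi=\phi$, and likewise $\theta\phi\theta=\theta$. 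The only genuine obstacle is the orthogonality argument in (ii) together with the completeness needed to keep $\phi(P)$ inside $Lat({\mathcal B}_{{\mathcal M}})^\perp$; everything downstream is formal.
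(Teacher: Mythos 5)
Your proposal is correct, and while your proofs of (i) and (ii) coincide with the paper's (the same inclusion-of-defining-sets argument and the same orthogonality computation), your handling of (iii)--(v) takes a genuinely different, more conceptual route. The paper never isolates the adjunction $(P,Q)\in Bil({\mathcal M})\iff Q\leq\phi(P)\iff P\leq\theta(Q)$; instead it returns to the operator level in each part: for (iii) it verifies $\bigl(\vee{\mathcal C},\wedge\phi({\mathcal C})\bigr)\in Bil({\mathcal M})$ directly, via $\bigl(\wedge\phi({\mathcal C})\bigr)TP=\bigl(\wedge\phi({\mathcal C})\bigr)\phi(P)TP=0$ together with the (implicit) fact that annihilating every $P\in{\mathcal C}$ forces annihilation of $\vee{\mathcal C}$, and then invokes maximality; for the inequality $\phi\theta\phi(P)\leq\phi(P)$ in (v) it forms the bilattice meet $\bigl(P\wedge\theta\phi(P),\phi(P)\vee\phi\theta\phi(P)\bigr)$ and again appeals to maximality of $\phi(P)$. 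Your Galois-connection formulation absorbs all of this into formal lattice manipulations: (iii) follows by passing $\wedge\phi({\mathcal C})$ back and forth across the adjunction, and (v) by applying the order-reversing $\phi$ to the inequality in (iv). What the paper's route buys is self-containedness --- it never needs the converse implication $Q\leq\phi(P)\Rightarrow(P,Q)\in Bil({\mathcal M})$ as a stated step; what yours buys is the clean separation of the analytic content (assertion (ii) plus completeness of $Lat({\mathcal A}_{{\mathcal M}})$ and $Lat({\mathcal B}_{{\mathcal M}})^\perp$) from what is purely the standard theory of antitone Galois connections, and it also spares the extra operator computation in (iii). One bookkeeping point: each application of the adjunction in (iii) requires $\wedge\phi({\mathcal C})\in Lat({\mathcal B}_{{\mathcal M}})^\perp$ and $\vee{\mathcal C}\in Lat({\mathcal A}_{{\mathcal M}})$; this is exactly the completeness you record in (ii), but you use it silently in (iii), and it deserves an explicit mention there, as the paper gives it.
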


\begin{proof}
Assertions (i)--(iv) will only be  proved for the map $\phi$, since the corresponding assertions concerning the map $\theta$ can be similarly proved. For the 
same reason, only the first equality in (v) will be proved.

(i) If $P_1, P_2 \in Lat({\mathcal A}_{{\mathcal M}})$ are such that $P_1\leq P_2$, then $P_1P_2=P_1=P_2P_1$. Hence, if $Q$ is a projection in  
${\mathcal P}(H_2)$ with $(P_2,Q)\in Bil({\mathcal M})$, then, for every $T\in {\mathcal M}$, we have  $QTP_1=QTP_2P_1=0$, yielding  
$(P_1,Q)\in Bil({\mathcal M})$. It follows that
\begin{equation*}
\begin{split}
\phi(P_2)&=\vee \{ Q\in Lat({\mathcal B}_{{\mathcal M}})^\perp;\;(P_2,Q)\in Bil({\mathcal M})\}\\
&\leq \vee \{ Q\in Lat({\mathcal B}_{{\mathcal M}})^\perp;\;(P_1,Q)\in Bil({\mathcal M})\}=\phi(P_1).
\end{split}
\end{equation*} 

(ii) Let $P\in Lat({\mathcal A}_{{\mathcal M}})$. We have to show that $ \phi(P)TP=0$, for every $T\in {\mathcal M}$. Let $T\in {\mathcal M}$, 
$x\in H_1$, $y\in H_2$ be arbitrary, and let $Q\in {\mathcal P}(H_2)$ be a projection such that $(P,Q)\in Bil({\mathcal M})$. 
Then $ \langle TPx,Qy\rangle=\langle QTPx,y\rangle=0$, that is to say that $TPH_1 \perp QH_2$. Since $\phi(P)$ is the orthogonal projection 
onto the closed linear  span of all the  spaces $QH_2$, where $Q$ is an orthogonal projection in ${\mathcal P}(H_2)$ such that 
$(P,Q)\in Bil({\mathcal M})$, we conclude that $TPH_1 \perp \phi(P)H_2$, i.e., $\phi(P)TP=0$. 

(iii) Let ${\mathcal C} \subseteq Lat({\mathcal A}_{{\mathcal M}})$ be a non-empty set. Then, for all  $P\in {\mathcal C}$, $P\leq \vee {\mathcal C}$ and, 
since $Lat({\mathcal A}_{{\mathcal M}})$ is complete,  $\vee {\mathcal C} \in Lat({\mathcal A}_{{\mathcal M}})$. It follows that, for all 
$P\in {\mathcal C}$, $\phi(\vee {\mathcal C})\leq \phi(P)$, as $\phi$ is an order-reversing map. Therefore 
$\phi(\vee {\mathcal C})\leq \wedge \phi({\mathcal C})$.

To show that this  inequality can be reversed, we shall prove firstly that $(\vee {\mathcal C},$ $ \wedge \phi({\mathcal C}))$ $\in Bil({\mathcal M})$. Let 
$T\in {\mathcal M}$ be arbitrary. Then, for every $P\in {\mathcal C}$,
we have $\wedge \phi({\mathcal C})\leq \phi(P)$, from which it follows that $\bigl(\wedge \phi({\mathcal C})\bigr) \phi(P)=\wedge \phi({\mathcal C})$. Hence, 
for all $P\in {\mathcal C}$, 
\begin{equation*}
\bigl(\wedge \phi({\mathcal C})\bigr) TP=\bigl(\wedge \phi({\mathcal C})\bigr) \phi(P)TP=0
\end{equation*}  
and, consequently, 
$\bigl(\wedge \phi({\mathcal C})\bigr) T\bigl(\vee {\mathcal C}\bigr)=0$, i.e., 
$\bigl( \vee {\mathcal C}, \wedge \phi({\mathcal C})\bigr) \in Bil({\mathcal M})$. It follows, by the definition of 
$\phi(\vee {\mathcal C})$, that $\wedge \phi({\mathcal C}) \leq \phi(\vee {\mathcal C})$. 

(iv) Let $P\in Lat({\mathcal A}_{{\mathcal M}})$. By assertion (ii), we have $(P,\phi(P)),$ $(\theta(\phi(P)),$ $\phi(P))\in Bil({\mathcal M})$. Since, 
by the definition  \eqref{eq05}, the projection $\theta(\phi(P))$ is the largest $P^\prime\in Lat({\mathcal A}_{{\mathcal M}})$ such that 
$(P^\prime,\phi(P))\in Bil({\mathcal M})$, we conclude that $P \leq \theta\bigl(\phi(P)\bigr)$. 

(v) Let $P\in Lat({\mathcal A}_{{\mathcal M}})$ be arbitrary. By assertion (iv), we know that $\phi(P) \leq \phi\theta\phi(P)$. Moreover, since by (ii) 
of this proposition,  $(P,\phi(P))$ and $(\theta\phi(P),\phi\theta\phi(P))$ lie in the bilattice $Bil({\mathcal M})$, we have 
$\bigl(P\wedge \theta\phi(P),\phi(P)\vee \phi\theta\phi(P)\bigr)\in Bil({\mathcal M})$. Notice however that (iv) implies $P\wedge \theta\phi(P)=P$ and 
$\phi(P)\vee \phi\theta\phi(P)=\phi\theta\phi(P)$. Thus, $(P,\phi\theta\phi(P))\in Bil({\mathcal M})$. By  the definition of $\phi$, the projection $\phi(P)$ is 
the largest $Q\in Lat({\mathcal B}_{{\mathcal M}})^{\perp}$ having the property $(P,Q)\in Bil({\mathcal M})$. Hence, $\phi\theta\phi(P)\leq\phi(P)$. 
Consequently, for all  $P\in Lat({\mathcal A}_{{\mathcal M}})$, we have $\phi\theta\phi(P)=\phi(P)$. 
\end{proof}

Let $\Psi_1,\, \Psi_2:\;Bil({\mathcal M}) \to Bil({\mathcal M})$ be defined by
\begin{equation} \label{eq07}
\begin{split}
\Psi_1(P,Q)&=(\theta\phi(P),\phi(P))\qquad \text{and}\\ 
\Psi_2(P,Q)&=(\theta(Q),\phi\theta(Q))\qquad (P,Q)\in Bil({\mathcal M}).
\end{split}
\end{equation}
Observe that Proposition \ref{prop10}~(ii) guarantees that the maps $\Psi_1$ and $\Psi_2$ are well defined.
 
\begin{corollary} \label{cor02}
Let ${\mathcal M}$ be a linear subspace of ${\mathcal B}(H_1,H_2)$ and let 
$\Psi_1,\, \Psi_2:\;Bil({\mathcal M})$ $\to Bil({\mathcal M})$ be the maps defined in \eqref{eq07}. Then $\Psi_1$, $\Psi_2$ are order-preserving maps
and $\Psi_1(Bil({\mathcal M}))=\Psi_2(Bil({\mathcal M}))$.
\end{corollary}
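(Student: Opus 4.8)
The plan is to verify the two assertions separately, using the structural properties of $\phi$ and $\theta$ established in Proposition \ref{prop10}. Throughout, recall that the order on $Bil({\mathcal M})$ is $\preceq$, so $(P_1,Q_1)\preceq (P_2,Q_2)$ means $P_1\leq P_2$ and $Q_1\geq Q_2$, and observe that $\phi$ and $\theta$ are order-\emph{reversing} by Proposition \ref{prop10}~(i). This sign flip is precisely what will make the compositions appearing in $\Psi_1$ and $\Psi_2$ behave monotonically in the right direction.

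First I would prove that $\Psi_1$ is order-preserving. Suppose $(P_1,Q_1)\preceq (P_2,Q_2)$ in $Bil({\mathcal M})$, so in particular $P_1\leq P_2$. Since $\phi$ is order-reversing, $\phi(P_2)\leq \phi(P_1)$, and applying the order-reversing $\theta$ gives $\theta\phi(P_1)\leq \theta\phi(P_2)$. Thus the first coordinate of $\Psi_1(P_1,Q_1)$ is below that of $\Psi_1(P_2,Q_2)$, while $\phi(P_2)\leq \phi(P_1)$ says the second coordinate satisfies the reversed inequality required by $\preceq$. Hence $\Psi_1(P_1,Q_1)\preceq \Psi_1(P_2,Q_2)$. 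The argument for $\Psi_2$ is entirely symmetric: from $Q_1\geq Q_2$ one gets $\theta(Q_1)\leq\theta(Q_2)$ and, applying $\phi$, the reversed inequality $\phi\theta(Q_2)\leq\phi\theta(Q_1)$, which together are exactly $\Psi_2(P_1,Q_1)\preceq\Psi_2(P_2,Q_2)$.

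Next I would establish the equality of the ranges. The key tool is the idempotency relations $\phi\theta\phi=\phi$ and $\theta\phi\theta=\theta$ from Proposition \ref{prop10}~(v). To show $\Psi_1(Bil({\mathcal M}))\subseteq\Psi_2(Bil({\mathcal M}))$, take a point $\Psi_1(P,Q)=(\theta\phi(P),\phi(P))$ and set $Q_0=\phi(P)$, which lies in $Lat({\mathcal B}_{{\mathcal M}})^\perp$. By Proposition \ref{prop10}~(ii) the pair $(\theta(Q_0),Q_0)$ lies in $Bil({\mathcal M})$, so I may evaluate $\Psi_2$ there, obtaining $\Psi_2(\theta(Q_0),Q_0)=(\theta(Q_0),\phi\theta(Q_0))=(\theta\phi(P),\phi\theta\phi(P))$. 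The relation $\phi\theta\phi=\phi$ collapses the second coordinate to $\phi(P)$, so this equals $(\theta\phi(P),\phi(P))=\Psi_1(P,Q)$, which is therefore in the range of $\Psi_2$. The reverse inclusion is proved symmetrically, starting from $\Psi_2(P,Q)=(\theta(Q),\phi\theta(Q))$, setting $P_0=\theta(Q)$, using $(P_0,\phi(P_0))\in Bil({\mathcal M})$, and invoking $\theta\phi\theta=\theta$ to collapse the first coordinate.

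I expect the main obstacle to be bookkeeping rather than any deep difficulty: one must keep track of which inequalities reverse under the order-reversing maps and confirm at each stage that the pairs fed into $\phi$, $\theta$, $\Psi_1$, and $\Psi_2$ actually lie in the domains where the cited properties apply. In particular, the range equality hinges on correctly reading Proposition \ref{prop10}~(ii), which certifies that $(\theta\phi(P),\phi(P))$ and $(\theta(Q),\phi\theta(Q))$ genuinely belong to $Bil({\mathcal M})$, so that the compositions in \eqref{eq07} are legitimate; once that membership is in hand, the idempotency identities of part (v) do all the real work.
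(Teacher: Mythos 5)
Your proof is correct and follows essentially the same route as the paper, which simply cites Proposition \ref{prop10}~(i) for order-preservation and Proposition \ref{prop10}~(v) for the equality of ranges; you have merely filled in the details (including the use of part (ii) to justify that the pairs $(\theta(Q_0),Q_0)$ and $(P_0,\phi(P_0))$ lie in $Bil({\mathcal M})$, so that $\Psi_2$ and $\Psi_1$ may be evaluated there). All the inequalities and the collapsing via $\phi\theta\phi=\phi$ and $\theta\phi\theta=\theta$ check out exactly as you state them.
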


\begin{proof} It easily follows from Proposition \ref{prop10}~(i) that  $\Psi_1$ and $\Psi_2$ are order-pre\-ser\-ving maps. That the images of $\Psi_1$ and $\Psi_2$ 
coincide is an immediate consequence of Proposition \ref{prop10}~(v).
\end{proof}

We are now able to prove our main result.

\begin{theorem} \label{theo01}
Let ${\mathcal M}$ be a linear subspace of $ {\mathcal B}(H_1,H_2)$ and let ${\mathcal A}_{{\mathcal M}}$, 
${\mathcal B}_{{\mathcal M}}$ be the algebras defined in \eqref{eq01}--\eqref{eq02}. The following assertions are equivalent.
\begin{itemize}
\item[(i)] ${\mathcal M}$ is a reflexive space.

\item[(ii)]  There exists a map $\Psi=(\psi_1,\psi_2): Bil({\mathcal M}) \to Bil({\mathcal M})$ such that $P\leq \psi_1(P,Q)$ and $Q\leq \psi_2(P,Q)$, for any 
pair $(P,Q)\in Bil({\mathcal M})$, and
$$
{\mathcal M}=\{T\in {\mathcal B}(H_1,H_2);\; \psi_2(P,Q)T\psi_1(P,Q)=0,\; \text{for all}\; (P,Q)\in Bil( {\mathcal M})\}.
$$

\item[(iii)]  There exists a map $\psi_1: Lat({\mathcal B}_{{\mathcal M}})^{\perp} \to Lat({\mathcal A}_{{\mathcal M}})$ such that $P\leq \psi_1(Q)$, 
for any pair $(P,Q)\in Bil({\mathcal M})$, and
$$
{\mathcal M}=\{T\in {\mathcal B}(H_1,H_2);\; QT\psi_1(Q)=0,\; \text{for all}\; Q\in Lat({\mathcal B}_{{\mathcal M}})^{\perp}\}.$$

\item[(iv)]  There exists a map $\psi_2: Lat({\mathcal A}_{{\mathcal M}}) \to Lat({\mathcal B}_{{\mathcal M}})^{\perp}$ such that $Q\leq \psi_2(P)$, 
for any pair $(P,Q)\in Bil({\mathcal M})$, and
$$
{\mathcal M}=\{T\in {\mathcal B}(H_1,H_2);\; \psi_2(P)TP=0,\; \text{for all}\; P\in Lat({\mathcal A}_{{\mathcal M}})\}.
$$
\end{itemize}
\end{theorem}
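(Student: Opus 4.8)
The plan is to build all three characterizations around a single reduction. First I would record that, by Proposition~\ref{prop09}, the subspace $\mathcal{M}$ is reflexive if and only if $OpBil(\mathcal{M})=\mathcal{M}$, and that the inclusion $\mathcal{M}\subseteq OpBil(\mathcal{M})$ holds for every subspace, since each $T\in\mathcal{M}$ satisfies $QTP=0$ on $Bil(\mathcal{M})\subseteq BIL(\mathcal{M})$. Thus in every ``$\Rightarrow$ (i)'' direction only the reverse inclusion $OpBil(\mathcal{M})\subseteq\mathcal{M}$ must be produced, while in every ``(i) $\Rightarrow$'' direction the task is to exhibit a concrete map whose defining set equals $OpBil(\mathcal{M})$.

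For the forward directions I would use the canonical maps. For (i) $\Rightarrow$ (iii) take $\psi_1=\theta$: the order condition $P\leq\theta(Q)$ for $(P,Q)\in Bil(\mathcal{M})$ is immediate from the supremum \eqref{eq06}, and the set $S=\{T:\ QT\theta(Q)=0\ \text{for all}\ Q\in Lat(\mathcal{B}_{\mathcal{M}})^{\perp}\}$ equals $OpBil(\mathcal{M})$ by a two-sided argument: $OpBil(\mathcal{M})\subseteq S$ uses $(\theta(Q),Q)\in Bil(\mathcal{M})$ from Proposition~\ref{prop10}(ii), while $S\subseteq OpBil(\mathcal{M})$ uses $P\leq\theta(Q)$ to write $\theta(Q)P=P$, so that $QTP=QT\theta(Q)P=0$. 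Reflexivity then gives $S=\mathcal{M}$. The direction (i) $\Rightarrow$ (iv) is the mirror image with $\psi_2=\phi$ and $(P,\phi(P))\in Bil(\mathcal{M})$, and (i) $\Rightarrow$ (ii) uses $\Psi=\Psi_1$: here the order conditions are Proposition~\ref{prop10}(iv) together with the defining supremum, and the set $\{T:\ \phi(P)T\theta\phi(P)=0\}$ is identified with $OpBil(\mathcal{M})$ by the same mechanism, now invoking $(\theta\phi(P),\phi(P))\in Bil(\mathcal{M})$ and the fact that $(P,0)\in Bil(\mathcal{M})$ lets $P$ range over all of $Lat(\mathcal{A}_{\mathcal{M}})$.

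The converse directions all run through one observation, which I expect to be the only genuinely subtle point. In (iii) the map $\psi_1$ is only assumed to take values in $Lat(\mathcal{A}_{\mathcal{M}})$, so it is not a priori clear that $(\psi_1(Q),Q)\in Bil(\mathcal{M})$; this must be read off from the stated equality itself. Indeed, the inclusion $\mathcal{M}\subseteq\{T:\ QT\psi_1(Q)=0\}$ says precisely that $QT'\psi_1(Q)=0$ for every $T'\in\mathcal{M}$ and every $Q$, which is exactly the assertion $(\psi_1(Q),Q)\in Bil(\mathcal{M})$. Once this is in hand, any $T\in OpBil(\mathcal{M})$ satisfies $QT\psi_1(Q)=0$ for all $Q$, hence lies in the defining set $=\mathcal{M}$, giving $OpBil(\mathcal{M})\subseteq\mathcal{M}$ and therefore reflexivity. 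The implications (iv) $\Rightarrow$ (i) and (ii) $\Rightarrow$ (i) are identical in spirit, the relevant pairs being $(P,\psi_2(P))$ and $\Psi(P,Q)$ respectively; in the case of (ii) this membership is even built into the hypothesis that $\Psi$ maps $Bil(\mathcal{M})$ into itself.

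In summary, the theorem reduces to two ingredients: the identity $OpBil(\mathcal{M})=OpBIL(\mathcal{M})$ together with the canonical maps $\theta$, $\phi$, $\Psi_1$ supplied by Proposition~\ref{prop10} and Corollary~\ref{cor02} for the forward directions, and the bookkeeping remark that an inclusion $\mathcal{M}\subseteq\{T:\ Q_0TP_0=0,\ \ldots\}$ automatically certifies its defining pairs as elements of $Bil(\mathcal{M})$ for the converse directions. The main obstacle is neither a computation nor a delicate estimate but precisely this last interpretive step, which guarantees that the user-supplied maps of (iii) and (iv)---whose codomains are mere sublattices rather than the bilattice---nevertheless produce admissible pairs in $Bil(\mathcal{M})$.
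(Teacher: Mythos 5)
Your proof is correct and follows essentially the same route as the paper: the forward directions use the canonical maps $\theta$, $\phi$, $\Psi_1$ of \eqref{eq05}--\eqref{eq07} together with Proposition \ref{prop10} (and Proposition \ref{prop09} to pass between $BIL$ and $Bil$), and the converse directions rest on the observation that the pairs produced by the hypothesized maps lie in $Bil({\mathcal M})$, forcing $OpBil({\mathcal M})\subseteq {\mathcal M}$. The only difference is expository: the paper proves (i)$\iff$(ii) in full and declares the rest ``similarly proved,'' whereas you make explicit the one point that proof sketch leaves implicit, namely that in (iii) and (iv) the memberships $(\psi_1(Q),Q)\in Bil({\mathcal M})$ and $(P,\psi_2(P))\in Bil({\mathcal M})$ must be read off from the stated equality combined with the domain/codomain assumptions on the maps.
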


\begin{proof}
Firstly we show that (i)$\iff$(ii). Assume that ${\mathcal M}$ is a reflexive space.  Let $\Psi$ be the map $\Psi_1$ defined in \eqref{eq07}, and let 
${\mathcal F}=\Psi(Bil({\mathcal M}))$. Clearly, ${\mathcal F} \subseteq Bil({\mathcal M})$ and, therefore, 
$Op({\mathcal F}) \supseteq OpBil({\mathcal M})={\mathcal M}$. 

To reverse the inclusion, fix $T\in Op({\mathcal F})$. Observe that,  by Proposition \ref{prop10}~(iv),
for any pair $(P,Q)\in Bil({\mathcal M})$,  $P\leq \theta\phi(P)=\psi_1(P,Q)$ and, by the definition of the map $\phi$,  $Q\leq \phi(P)=\psi_2(P,Q)$. Hence, 
for all $(P,Q)\in Bil({\mathcal M})$, $P=\theta\phi(P)P$,  $Q=Q\phi(P)$ and, consequently,  
$$ QTP =Q\phi(P)T\theta\phi(P)P=0. $$
It follows that $T\in OpBil({\mathcal M})={\mathcal M}$, as required. 

Conversely, suppose that there exists a map $\Psi=(\psi_1, \psi_2)$ as  stated in (ii).
It has to be shown that ${\mathcal M}=Op Bil({\mathcal M})$. Since it is clear that ${\mathcal M}\subseteq Op Bil({\mathcal M})$, it remains to show that 
${\mathcal M}\supseteq Op Bil({\mathcal M})$. Let $S\in Op Bil({\mathcal M})$ be arbitrary. Hence, for any pair $(P',Q')\in Bil({\mathcal M})$, we have 
$Q'SP'=0$. In particular, since for $(P,Q)\in Bil({\mathcal M})$, the image 
$(\psi_1(P,Q),\psi_2(P,Q))$ lies also  in $Bil{\mathcal M}$, it follows that $\psi_2(P,Q)T\psi_1(P,Q)=0$.
Finally, this yields that $S$ lies in the set 
$$\{T\in {\mathcal B}(H_1,H_2);\; \psi_1(P,Q)T\psi_2(P,Q)=0\quad \forall\, (P,Q)\in Bil( {\mathcal M})\},$$
which  coincides with  ${\mathcal M}$, by the assumption.

The remaining equivalences are similarly proved.
Notice that to prove the implication (i)$\Rightarrow$(iii) (respectively, (i)$\Rightarrow$(iv)), we set $\psi_1=\theta$  (respectively, $\psi_2=\phi$).
\end{proof}

Observe that the maps appearing in the equalities characterizing a reflexive space ${\mathcal M}$ in Theorem \ref{theo01}  need not be 
unique (see \cite[Remark, p. 223]{EP}). In particular, the map $\Psi$ in Theorem \ref{theo01}~(ii) can be chosen to be order-preserving.


\end{document}